\theoremstyle{definition}
\newtheorem{theorem}{Theorem}[section]
\newtheorem{lemma}[theorem]{Lemma}
\newtheorem{definition}{Definition}[section]
\newtheorem{corollary}{Corollary}[theorem]
\title{Effects From Extra Die Rolls and Choosing the Highest or Lowest}
\author{Fan Jiang, Elvin Jiang}
\begin{document}

\maketitle
\begin{abstract}
This paper looks into the gain or loss from rolling a fair die multiple times and choosing the highest or lowest number as the outcome over rolling the die just once. 
Specifically, this paper gives a general formula for the expected value of choosing the highest or lowest value of any number of die rolls and sides. 
It also shows how, for a fixed number of rolls, the ratio between this expected value and the number of sides converges as the number of sides increases asymptotically.
The converging behavior of this ratio helps formulate the aforementioned gain or loss. 
\end{abstract}
\section*{Introduction} \indent \indent
The expected value of rolling a fair $s$-sided die once is simply $\frac{s+1}{2}$.
However, many dice-rolling games, such as Dungeons and Dragons, include a mechanism that allows the player to roll the die two or more times in order to give an advantage (or disadvantage) to the player, and thus skewing the expected value.
\newline \indent
In a game like Dungeons and Dragons, a player is often prompted to roll a 20-sided die, and have that value compared against some threshold to see if an action goes through. 
An extra die roll in the game is called "advantage" and allows the player to choose the highest value from the two rolls. For example, if a player, who needs a 13 or higher to complete an action, rolls a 7 and a 20 with advantage, the player would succeed in the action as 20 is greater than the 13. 
Conversely, "disadvantage" in the game also has the player roll the die one extra time, but must choose the lower value. 
Were the players to have disadvantage in that same scenario as above, their action would fail as 7 is less than 13.
\newline \indent
This topic was discussed in a YouTube video by Stand-up Maths \cite{JaneStreet}. 
The video claimed that, as the number of sides $s$ approaches $\infty$, the ratio between the expected value of picking the highest value out of fixed $r$ rolls and $s$ is $\frac{r}{r+1}$. 
This paper will expand upon the work in the video by proving a general formula for the expected value of rolling a fair $s$-sided die $r$ times and choosing the highest or lowest as the outcome.
A formal proof will be given for the aforementioned ratio in the YouTube video for all $r$ and $s$.
\newline \indent
Section 1 of this paper sets up the problem and defines the basic concepts that will be used throughout the paper. 
The formula for calculating the frequency of each random outcome in different experiments is presented in Section 2.
In the same section, the expected values of the random experiments will be found. 
In Section 3, asymptotic results for the expected values will be studied when the number of sides approaches infinity.
Finally, conclusions will be made in Section 4.

\section{Problem Setup}
\subsection{Definitions}
The die being rolled hereafter is a fair $s$-sided die with each value in the set $\{1, \cdots, s\}, s\geq2$, having an equal probability of being chosen. 
\begin{definition}
The base experiment is a random experiment in which the die is rolled once.
\end{definition}
Let $X$ be the random variable for the base experiment outcomes.
The expected value of $X$ is simply 
\begin{gather*}
E\left[X\right]=\frac{s+1}{2} 
\end{gather*}
\begin{definition}
An advantage experiment is a random experiment in which the die is rolled $r$, $r>1$, times and the largest number is chosen to be the outcome of the experiment.
\newline \indent Let $H_{r,s}$ be the random variable for the advantage experiment outcomes and $E\left[H_{r,s}\right]$ be the expected value of $H_{r,s}$.
\end{definition}
\begin{definition}
A disadvantage experiment is a random experiment in which a fair $s$-sided die is rolled $r$ times and the smallest number is chosen to be the outcome of the experiment.
\newline\indent Let $L_{r,s}$ be the random variable for the disadvantage experiment outcomes and $E\left[L_{r,s}\right]$ be the expected value of $L_{r,s}$.
\end{definition}
\emph{Example 1.} 
A 6-sided die is rolled once and returns a value of 3. 
This is the base experiment.
The die is then rolled twice, returning a 4 and a 1, respectively.
In an advantage experiment, 4 is chosen to be the outcome and a gain of 1 is realized.
In a disadvantage experiment, 1 is chosen to be the outcome and a loss of 2 is realized.

\section{Expected Values of Advantage and Disadvantage Experiments}
\indent In order to calculate the expected value of either experiment, the frequency of each outcome must be calculated.
The following lemma shows how this frequency can be found.
\begin{lemma}
In an advantage experiment, the frequency of outcome $i,i=1,\cdots,s$, denoted as $\phi_i$, is \\
\begin{gather*}
\phi_i=\sum_{j=1}^{r}{r\choose j}(i-1)^{r-j}
\end{gather*}
and $\phi_i$ can be found as 
\begin{gather*}
\phi_i=i^r-(i-1)^r.
\end{gather*}
\end{lemma}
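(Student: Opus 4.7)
The plan is to establish the first expression by direct combinatorial counting, then derive the closed form via the binomial theorem.

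First I would argue that the outcome of the advantage experiment equals $i$ exactly when at least one of the $r$ rolls shows $i$ and none shows a value greater than $i$. To count such sequences of rolls, I would partition them according to the number $j$ of the $r$ rolls that land on exactly $i$, where $j$ ranges from $1$ to $r$. For a fixed $j$, there are $\binom{r}{j}$ ways to choose which positions among the $r$ rolls take the value $i$, and each of the remaining $r-j$ rolls must lie in $\{1,\dots,i-1\}$, contributing $(i-1)^{r-j}$ choices. Summing over $j$ gives
\begin{gather*}
\phi_i = \sum_{j=1}^{r}\binom{r}{j}(i-1)^{r-j},
\end{gather*}
which is the first claimed formula.

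For the closed form, I would apply the binomial theorem to $i^r = \bigl((i-1)+1\bigr)^r = \sum_{j=0}^{r}\binom{r}{j}(i-1)^{r-j}$ and note that the $j=0$ term is $(i-1)^r$; subtracting it from both sides yields
\begin{gather*}
\sum_{j=1}^{r}\binom{r}{j}(i-1)^{r-j} = i^r - (i-1)^r,
\end{gather*}
which matches $\phi_i$ from the first part. Alternatively, the closed form can be justified directly by inclusion--exclusion on the event ``maximum equals $i$'': the number of roll sequences whose maximum is at most $i$ is $i^r$, and subtracting off those whose maximum is at most $i-1$ leaves exactly those whose maximum is $i$.

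There is no real obstacle here; the only thing to be careful about is ensuring the lower index $j=1$ (not $j=0$) in the sum, since $j=0$ would correspond to no roll attaining $i$ and would overcount. Both derivations are short, and the binomial-theorem step cleanly bridges the two expressions.
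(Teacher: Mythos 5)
Your proof is correct and follows essentially the same approach as the paper: counting sequences by the number $j$ of rolls equal to $i$ to get the sum, then applying the binomial theorem to $((i-1)+1)^r$ to obtain the closed form. The additional remark that $\phi_i = i^r - (i-1)^r$ can be seen directly by counting sequences with maximum at most $i$ versus at most $i-1$ is a nice (and arguably cleaner) alternative the paper does not mention, but it does not change the substance.
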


\begin{proof}
The value of $\phi_i$ can be found by counting.
In the advantage experiment, to get outcome $i$ in $r$ rolls of a die, at least one roll must return $i$, and all the other rolls must return a value less than $i$.
Therefore, the frequency $\phi_i$ can be found using the following summation
\begin{gather*}
\sum_{j=1}^r{r\choose{j}}(i-1)^{r-j}.
\end{gather*}
This states that if $j$, $j=1, \cdots, r$, rolls return the outcome $i$, then the remaining $r-j$ rolls must return a number from 1 to $i-1$.
That is, each of these $r-j$ rolls has $i-1$ options.
\newline \indent To prove the second part of the theorem, note that $i^r-(i-1)^r=((i-1)+1)^r-(i-1)^r$. Using the binomial formula to expand $((i-1)+1)^r$ yields
\begin{gather*}
\begin{split}
((i-1)+1)^r-(i-1)^r
& = \sum_{j=0}^r{r\choose{j}}(i-1)^{r-j}-(i-1)^r\\
& = \sum_{j=1}^r{r\choose{j}}(i-1)^{r-j}\\
& = \phi_i
\end{split}
\end{gather*}
\end{proof}

\begin{lemma}
For a disadvantage experiment, the frequency of outcome $i, i=1, \cdots, s$, denoted as $\psi_i$, is
\begin{gather*}
\psi_i=\sum_{j=1}^r{r\choose{j}}(s-i)^{r-j}
\end{gather*}
and $\psi_i$ can be found as
\begin{gather*}
\psi_i=(s-(i-1))^r-(s-i)^r
\end{gather*}
\end{lemma}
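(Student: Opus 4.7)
The plan is to mirror the structure of the proof of Lemma 2.1, exploiting the symmetry between taking the maximum and taking the minimum. In the disadvantage experiment, for outcome $i$ to be the smallest of $r$ rolls, at least one of the rolls must land on $i$ and every other roll must land on a value strictly greater than $i$. The allowable values for those other rolls are $i+1, i+2, \ldots, s$, so each such roll has exactly $s - i$ options. This is the key substitution relative to the advantage case, where the ``other rolls'' each had $i - 1$ options.

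First I would establish the summation formula by case-splitting on the number of rolls that return $i$. If exactly $j$ rolls show $i$, there are $\binom{r}{j}$ ways to choose which rolls those are and $(s-i)^{r-j}$ ways to assign the remaining rolls values from $\{i+1, \ldots, s\}$. Summing $j$ from $1$ to $r$ yields the stated expression for $\psi_i$. Next, to obtain the closed form, I would apply the binomial theorem to $(s - (i-1))^r = ((s - i) + 1)^r$, producing $\sum_{j=0}^r \binom{r}{j}(s-i)^{r-j}$, and then peel off the $j=0$ term, which equals $(s-i)^r$. What remains is exactly the summation formula for $\psi_i$ just derived, so rearranging gives $\psi_i = (s-(i-1))^r - (s-i)^r$.

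There is no genuine obstacle here; the entire argument is formally parallel to Lemma 2.1 with $(i-1)$ replaced by $(s-i)$ throughout. The only subtle point worth checking is the count $s - i$ for values strictly greater than $i$, which is immediate from enumerating $\{i+1, \ldots, s\}$. One could shortcut the whole proof by noting that the disadvantage experiment on an $s$-sided die is equivalent to an advantage experiment via the bijection $k \mapsto s + 1 - k$ applied to each roll, and then invoking Lemma 2.1 directly; however, carrying out the direct count mirrors the exposition of the previous lemma more closely and is the route I would take for consistency with the paper's style.
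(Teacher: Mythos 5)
Your proposal is correct and follows essentially the same approach as the paper: counting configurations with exactly $j$ rolls equal to $i$ and the remaining $r-j$ rolls drawn from the $s-i$ values strictly greater than $i$, then deriving the closed form by expanding $((s-i)+1)^r$ with the binomial theorem and cancelling the $j=0$ term. The symmetry shortcut you mention is a nice observation but, as you note, the direct count is what the paper does.
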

\begin{proof}
The value of $\psi_i$ can be found by counting.
In the disadvantage experiment, to get outcome $i$ in $r$ rolls of a die, at least one roll must return $i$, and all the other rolls must return a value greater than $i$.
Therefore, the frequency $\psi_i$ can be found using the following summation.
\begin{gather*}
\sum_{j=1}^r{r\choose{j}}(s-i)^{r-j}
\end{gather*}
This states that if $j$, $j=1, \cdots, r$, rolls return the outcome $i$, then the remaining $r-j$ rolls must return a number from i+1 to $s$.
That is, each of these $r-j$ rolls has $s-i$ options.
\newline \indent To prove the second part of the theorem, note that $(s-(i-1))^r-(s-1)^r=((s-i)+1)^r-(s-i)^r$.
Using the binomial formula to expand $((s-i)+1)^r$ yields,
\begin{gather*}
\begin{split}
((s-i)+1)^r-(s-i)^r & = \sum_{j=0}^r{r\choose{j}}(s-i)^{r-j}-(s-i)^r \\
& = \sum_{j=1}^r{r\choose{j}}(s-i)^{r-j}
\end{split}
\end{gather*}
\end{proof}
After the frequency of each outcome from either an advantage or a disadvantage experiment is found, the expected value of these outcomes can be calculated as shown in the following theorems. 
\begin{theorem}
The expected value of the random variable, $H_{r,s}$, can be found as 
\begin{gather*}
E[H_{r,s}]=s-\frac{\sum_{i=1}^{s-1}i^r}{s^r}.
\end{gather*}
\end{theorem}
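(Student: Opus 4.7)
The plan is to combine the closed form $\phi_i = i^r - (i-1)^r$ from Lemma 2.1 with the definition of expectation, and then to collapse the resulting sum using Abel summation (summation by parts).

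First I would write
\begin{gather*}
E[H_{r,s}] \;=\; \sum_{i=1}^{s} i \cdot \Pr(H_{r,s}=i) \;=\; \frac{1}{s^{r}} \sum_{i=1}^{s} i\,\bigl(i^{r} - (i-1)^{r}\bigr),
\end{gather*}
using the fact that the total number of equally likely outcomes across $r$ rolls is $s^{r}$, and invoking Lemma 2.1 for the frequency $\phi_i$. The entire proof reduces to evaluating the numerator $N := \sum_{i=1}^{s} i\bigl(i^{r} - (i-1)^{r}\bigr)$ in closed form.

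Next I would split $N$ into two sums and reindex the second one by $k=i-1$, giving $\sum_{i=1}^{s} i\cdot i^{r} - \sum_{k=0}^{s-1}(k+1)k^{r}$. Since the $k=0$ term vanishes (as $r\geq 1$), pulling out the $i=s$ term from the first sum and combining the remaining ranges $i=1,\dots,s-1$ causes the $i\cdot i^{r}$ contributions to cancel against the $k\cdot k^{r}$ contributions, leaving only $-\sum_{i=1}^{s-1} i^{r}$ plus the extracted boundary term $s\cdot s^{r}$. Thus $N = s^{r+1} - \sum_{i=1}^{s-1} i^{r}$, and dividing by $s^{r}$ yields the claimed identity.

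There is no real obstacle here; the only mildly nonobvious step is recognizing that the sum $\sum i(a_i - a_{i-1})$ with $a_i = i^{r}$ is tailor-made for Abel summation, which turns a weighted difference into a boundary term minus a straight sum of the $a_i$'s. If one prefers to avoid naming the technique, the same cancellation can be presented as rewriting $i = s - \bigl((s-1) - (s-2) - \cdots\bigr)$ and telescoping directly, but the reindexing argument above is the cleanest route.
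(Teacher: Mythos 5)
Your proposal is correct and follows essentially the same route as the paper: both start from $E[H_{r,s}]=\frac{1}{s^r}\sum_{i=1}^{s}i\phi_i$ with $\phi_i=i^r-(i-1)^r$ and collapse the sum to $s^{r+1}-\sum_{i=1}^{s-1}i^r$. The paper simply cites ``the telescoping property'' where you spell out the reindexing (Abel summation) explicitly, which is a more complete justification of the same step.
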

\begin{proof}
By the definition of expected value, $E[H_{r,s}]$ can be found as

\begin{gather*}
\begin{split}
E[H_{r,s}]
& = \frac{\sum_{i=1}^si\phi_i}{s^r} \\
& = \sum_{i=1}^si(i^r-(i-1)^r)\\
& = \frac{s^{r+1}-\sum_{i=1}^{s-1}i^r}{s^r} \\
& = s-\frac{\sum_{i=1}^{s-1}i^r}{s^r}
\end{split}
\end{gather*}
where the third equality follows from the telescoping property.
\end{proof}

\begin{corollary}
For any fixed $s$ in an advantage experiment, 
\begin{gather*}
\lim_{r\rightarrow\infty}E[H_{r,s}]= s.
\end{gather*}
\end{corollary}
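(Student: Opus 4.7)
The plan is to start from the closed form given in Theorem 2.1, namely $E[H_{r,s}] = s - \frac{\sum_{i=1}^{s-1} i^r}{s^r}$, and show that the subtracted fraction vanishes as $r \to \infty$. Since $s$ is fixed, the task reduces entirely to analyzing the sum $\sum_{i=1}^{s-1} i^r / s^r$.

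The first step is to pull the $s^r$ inside the sum and rewrite the expression as
\begin{gather*}
\frac{\sum_{i=1}^{s-1} i^r}{s^r} = \sum_{i=1}^{s-1} \left(\frac{i}{s}\right)^r.
\end{gather*}
For every $i$ in the range $1, \ldots, s-1$ we have $0 < i/s < 1$, so each summand is a geometric sequence in $r$ that tends to $0$. Because the number of summands $s-1$ is a fixed constant independent of $r$, I can exchange limit and finite sum and conclude that the whole quantity tends to $0$.

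If a cleaner bound is preferred over the term-by-term argument, the second step can instead use the crude majorization $\left(\frac{i}{s}\right)^r \le \left(\frac{s-1}{s}\right)^r$ for every $i \le s-1$, giving the estimate
\begin{gather*}
0 \le \sum_{i=1}^{s-1} \left(\frac{i}{s}\right)^r \le (s-1)\left(\frac{s-1}{s}\right)^r,
\end{gather*}
and then invoking the squeeze theorem, since the right-hand side vanishes exponentially.

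There is essentially no obstacle here: the only thing to be careful about is emphasizing that $s$ is held fixed (so $s-1$ is a constant, not a growing upper limit) and that $i/s < 1$ strictly for $i \le s-1$, which is why each term decays. Plugging the resulting limit $0$ back into the formula from Theorem 2.1 immediately yields $\lim_{r \to \infty} E[H_{r,s}] = s$, which matches the intuitive picture that with arbitrarily many rolls one is almost certain to eventually see the maximum face.
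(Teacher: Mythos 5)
Your proposal is correct and follows essentially the same route as the paper: both start from the closed form in Theorem 2.1 and show the subtracted term vanishes because each summand is dominated by $\left(\frac{s-1}{s}\right)^r \to 0$ and there are only finitely many of them. Your version with the explicit bound $(s-1)\left(\frac{s-1}{s}\right)^r$ and the squeeze theorem is in fact slightly tidier than the paper's wording, but it is the same argument.
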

\begin{proof}
For any fixed s, the greatest value in the summation $\frac{\sum_{i=1}^{s-1}i^r}{s^r}$, is $\frac{(s-1)^r}{s^r}$. 
As $r\rightarrow\infty$, $\frac{(s-1)^r}{s^r}\rightarrow 0$. 
Therefore, all other components of the summation must also approach 0, and the entire summation must approach 0. 
Thus, as $r\rightarrow\infty$, $s-\frac{\sum_{i=1}^{s-1}i^r}{s^r}\rightarrow s$.
\end{proof}

\begin{theorem}
The expected value of the random variable, $L_{r,s}$, can be found as
\begin{gather*}
E[L_{r,s}]=1+\frac{\sum_{i=1}^{s-1}i^r}{s^r}.
\end{gather*}
\end{theorem}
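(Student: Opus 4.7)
The plan is to mirror the proof of Theorem 2.1, using Lemma 2.2 in place of Lemma 2.1. By the definition of expected value,
\begin{gather*}
E[L_{r,s}] = \frac{\sum_{i=1}^{s} i\, \psi_i}{s^r} = \frac{\sum_{i=1}^{s} i\bigl((s-i+1)^r - (s-i)^r\bigr)}{s^r}.
\end{gather*}
The main calculation is to recognize this as a telescoping-style sum and collapse it by Abel summation. Setting $a_i=(s-i+1)^r$, so that $\psi_i = a_i - a_{i+1}$ with $a_1 = s^r$ and $a_{s+1}=0$, I would rearrange $\sum_{i=1}^{s} i(a_i - a_{i+1})$ into $\sum_{i=1}^{s} a_i - s\, a_{s+1} = \sum_{i=1}^{s}(s-i+1)^r$, and then reindex $k = s-i+1$ to obtain $\sum_{k=1}^{s} k^r = s^r + \sum_{k=1}^{s-1} k^r$. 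Dividing by $s^r$ gives the claimed $1 + \frac{\sum_{i=1}^{s-1} i^r}{s^r}$.

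As a cross-check, a much cleaner alternative route exists via the symmetry $X \mapsto s+1-X$: this involution preserves the uniform distribution on $\{1,\cdots,s\}$ and swaps maxima with minima, so $L_{r,s}$ and $s+1-H_{r,s}$ are identically distributed. Combined with Theorem 2.1 this gives $E[L_{r,s}] = (s+1) - \left(s - \frac{\sum_{i=1}^{s-1} i^r}{s^r}\right) = 1 + \frac{\sum_{i=1}^{s-1} i^r}{s^r}$ in a single line, which I would use as an independent verification of the telescoping computation rather than as the main proof, since the paper's style has been to derive the two experiments in parallel from their frequency formulas.

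The one place to take care is the index bookkeeping in the Abel rearrangement: the terms $(s-i)^r$ and $(s-i+1)^r$ are easy to misalign by one, and the boundary contribution at $i=s+1$ must be correctly identified as zero so that the only leftover piece is $\sum_{i=1}^{s} a_i$. I would verify the endpoint contributions explicitly and sanity-check against a small case such as $s=r=2$, where direct enumeration of the four outcomes gives mean $5/4$, matching $1 + 1^2/2^2$.
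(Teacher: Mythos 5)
Your proposal is correct and follows essentially the same route as the paper: substitute the frequency formula $\psi_i$ from Lemma 2.2 into the definition of expected value and collapse the sum $\sum_{i=1}^{s} i\bigl((s-i+1)^r-(s-i)^r\bigr)$ by telescoping/Abel summation, which is exactly the step the paper performs (though it leaves the rearrangement implicit where you spell it out). Your symmetry cross-check via $X\mapsto s+1-X$ is a valid independent verification (it is effectively Corollary 2.4.2 read in reverse), but since you use it only as a sanity check, the main argument matches the paper's.
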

\begin{proof}
By the definition of expected value, $E[L_{r,s}]$ can be found as
\begin{gather*}
\begin{split}
E\left[L_{r,s}\right]
&=\frac{\sum_{i=1}^{s}i\psi(i)}{s^r}\\
&=\frac{\sum_{i=1}^{s}i\left(\left(s-(i-1)\right)^r-(s-i)^r\right)}{s^r}\\
&=\frac{s^r+\sum_{i=1}^{s-1}i^r}{s^r}\\
&=1+\frac{\sum_{i=1}^{s-1}i^r}{s^r}\\
\end{split}
\end{gather*}
\end{proof}
The following corollary follows immediately.
\begin{corollary}
For any fixed $s$ in a disadvantage experiment, as $r\rightarrow\infty$, $E[L_{r,s}]\rightarrow 1$.
\begin{proof}
Following from the proof in Corollary 3.2.1, $1+\frac{\sum_{i=1}^{s-1}i^r}{s^r}$, must approach 1 as $r\rightarrow\infty$
\end{proof}
\end{corollary}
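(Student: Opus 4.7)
The plan is to observe that this corollary follows directly from Theorem 3.2 (the closed form for $E[L_{r,s}]$) together with the limit computation already carried out inside the proof of Corollary 3.2.1. So essentially no new work is required; I just need to glue the two pieces together cleanly.

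First I would invoke Theorem 3.2 to write
\[
E[L_{r,s}]=1+\frac{\sum_{i=1}^{s-1}i^r}{s^r}.
\]
Then, for fixed $s$, I would argue that each term $\frac{i^r}{s^r}=\bigl(\tfrac{i}{s}\bigr)^r$ with $1\le i\le s-1$ satisfies $0<\tfrac{i}{s}<1$ and hence tends to $0$ as $r\to\infty$. Since the number of summands is the fixed constant $s-1$ (it does not depend on $r$), the finite sum of these terms also tends to $0$. This is exactly the same reasoning used in Corollary 3.2.1, so I can either cite that corollary directly or reproduce the one-line bound $\frac{\sum_{i=1}^{s-1}i^r}{s^r}\le (s-1)\bigl(\tfrac{s-1}{s}\bigr)^r\to 0$.

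Finally, combining the two steps gives $E[L_{r,s}]\to 1+0=1$, as desired. The only mild subtlety — which I do not think is really an obstacle — is that one should make clear the limit is taken with $s$ held fixed, so that the sum has a fixed finite length and termwise convergence implies convergence of the sum; there is no interchange-of-limits issue to worry about.
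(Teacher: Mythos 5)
Your proposal is correct and follows essentially the same route as the paper: apply the closed form $E[L_{r,s}]=1+\frac{\sum_{i=1}^{s-1}i^r}{s^r}$ from Theorem 3.2 and reuse the argument from Corollary 3.2.1 that the fixed-length sum of terms $\left(\frac{i}{s}\right)^r$ with $i<s$ vanishes as $r\to\infty$. Your explicit bound $\frac{\sum_{i=1}^{s-1}i^r}{s^r}\le (s-1)\left(\frac{s-1}{s}\right)^r$ is in fact slightly cleaner than the paper's wording, but it is the same idea.
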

\begin{corollary}
For any $r$ and $s$, $E\left[H_{r,s}\right]+E\left[L_{r,s}\right]=s+1.$ 
\end{corollary}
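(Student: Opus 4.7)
The plan is to invoke the two closed-form expressions already derived in the preceding theorems and add them directly. Theorem~2.1 gives
\[
E[H_{r,s}] = s - \frac{\sum_{i=1}^{s-1} i^r}{s^r},
\]
and Theorem~2.2 gives
\[
E[L_{r,s}] = 1 + \frac{\sum_{i=1}^{s-1} i^r}{s^r}.
\]
The fractional term $\frac{\sum_{i=1}^{s-1} i^r}{s^r}$ appears in both with opposite signs, so it cancels when the two are added, leaving exactly $s+1$. The whole proof reduces to this one-line algebraic observation; there is no real obstacle.

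If a more conceptual justification were preferred, I would give a symmetry argument as a remark. The map $x \mapsto s+1-x$ is a bijection of $\{1,\ldots,s\}$ that preserves the uniform distribution. Applying it coordinatewise to $r$ independent rolls sends the largest value to $s+1$ minus the smallest. Consequently $H_{r,s}$ and $s+1 - L_{r,s}$ have the same distribution, and taking expectations yields $E[H_{r,s}] = s+1 - E[L_{r,s}]$, equivalent to the stated identity. This version has the virtue of explaining \emph{why} the identity must hold, rather than merely confirming it through cancellation.

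My proposed write-up would present the direct algebraic cancellation as the official proof, since it follows immediately from results already in hand, and mention the symmetry interpretation in one sentence afterward so the reader sees the conceptual content. The hardest step, such as it is, is simply deciding how much commentary to include around what is essentially a trivial verification.
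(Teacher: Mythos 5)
Your proposed proof is correct and is essentially identical to the paper's own argument, which simply adds the two closed-form expressions and cancels the common fractional term. The symmetry remark via $x \mapsto s+1-x$ is a nice conceptual addition but does not change the substance of the proof.
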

\begin{proof}
\begin{equation*}
\begin{split}
E\left[H_{r,s}\right]+E\left[L_{r,s}\right]
&=s-\frac{\sum_{i=1}^{s-1}i^r}{s^r}
+1+\frac{\sum_{i=1}^{s-1}i^r}{s^r}
\\
&=s+1
\end{split}
\end{equation*}
\end{proof}

\section{Expected value as a portion of the number of sides asymptotically}

It is also interesting to see how the expected values behave when $s\rightarrow\infty$ while $r$ is fixed.
Note that, when $s\rightarrow\infty$, the expected values are unbounded.
The following theorems, however, show that the expected values as a portion of the number of sides, i.e., $\frac{E\left[H_{r,s}\right]}{s}$ and $\frac{E\left[L_{r,s}\right]}{s}$, asymptotically converge as $s\rightarrow\infty$.
\begin{theorem}
For an advantage experiment with a fixed number of rolls,
\begin{equation*}
\lim_{s\rightarrow\infty}\frac{E\left[H_{r,s}\right]}{s}=\frac{r}{r+1}.
\end{equation*}
\end{theorem}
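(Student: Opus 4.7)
The plan is to start from the closed form in Theorem 2.1 and recognize that dividing through by $s$ converts the leftover sum into a Riemann sum for a power function on $[0,1]$. Concretely, Theorem 2.1 gives
\begin{equation*}
\frac{E[H_{r,s}]}{s} = 1 - \frac{\sum_{i=1}^{s-1} i^r}{s^{r+1}},
\end{equation*}
so the task reduces to showing that the second term tends to $\frac{1}{r+1}$, since $1 - \frac{1}{r+1} = \frac{r}{r+1}$.

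Next, I would factor an $s$ out of the denominator to expose the Riemann-sum structure:
\begin{equation*}
\frac{\sum_{i=1}^{s-1} i^r}{s^{r+1}} = \frac{1}{s}\sum_{i=1}^{s-1}\left(\frac{i}{s}\right)^{\!r}.
\end{equation*}
The right-hand side is a left Riemann sum for $\int_0^1 x^r\,dx$ on the partition $\{0,\frac{1}{s},\frac{2}{s},\ldots,\frac{s-1}{s},1\}$ (it omits the rightmost rectangle, whose area is $\frac{1}{s}\to 0$). Since $x^r$ is continuous on $[0,1]$, the Riemann sums converge to the integral, which evaluates to $\frac{1}{r+1}$.

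An alternative, completely elementary route would be to invoke Faulhaber's formula, $\sum_{i=1}^{s-1} i^r = \frac{s^{r+1}}{r+1} + O(s^r)$, and divide by $s^{r+1}$; the lower-order terms vanish in the limit. I would probably present the Riemann-sum version because it is cleaner and fits the expository style of the paper, but I would mention Faulhaber as a self-contained verification that does not appeal to integration theory.

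The main obstacle is really just a matter of justification rather than difficulty: one must be explicit that a left Riemann sum with an omitted top endpoint still converges to the integral (equivalently, that the missing term $\frac{1}{s}\cdot 1^r$ is $O(1/s)$). Once that observation is in place, the limit follows immediately and the conclusion $\lim_{s\to\infty} \frac{E[H_{r,s}]}{s} = \frac{r}{r+1}$ is at hand.
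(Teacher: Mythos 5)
Your proof is correct, but your primary route differs from the paper's. The paper applies Faulhaber's formula, writing $\sum_{i=1}^{s-1} i^r = \frac{1}{r+1}\sum_{j=0}^{r}(-1)^j\binom{r+1}{j}B_j(s-1)^{r+1-j}$ with $B_j$ the Bernoulli numbers, and then argues that after dividing by $s^{r+1}$ only the leading term, with coefficient $\frac{1}{r+1}$, survives the limit --- essentially the ``alternative'' you sketch in your third paragraph. Your main argument instead recognizes $\frac{1}{s}\sum_{i=1}^{s-1}\left(\frac{i}{s}\right)^r$ as a left Riemann sum for $\int_0^1 x^r\,dx$ with one rectangle of area $O(1/s)$ omitted, which is entirely valid since $x^r$ is continuous on $[0,1]$. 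The Riemann-sum version is more self-contained (no citation of Faulhaber or Bernoulli numbers needed) and arguably cleaner, at the cost of appealing to integration theory rather than a purely algebraic identity; the paper's version keeps everything in the realm of polynomial identities but must import the Faulhaber expansion as a black box. You correctly identify the one point needing care in your approach --- that dropping the top endpoint does not affect the limit --- and your handling of it is adequate.
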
 
\begin{proof}: Note that, for an advantage experiment,
\begin{gather*}
\begin{split}
\frac{E\left[H_{r,s}\right]}{s}
&=\frac{s-\frac{\sum_{i=1}^{s-1}i^r}{s^r}}{s}\\
&=1-\frac{\sum_{i=1}^{s-1}i^r}{s^{r+1}}\\
&=1-\frac{\frac{1}{r+1}\sum_{j=0}^r(-1)^j{r+1\choose j}B_j(s-1)^{r+1-j}}{s^{r+1}},
\end{split}
\end{gather*}
where the last equality follows from Faulhaber's formula \cite{Faulhaber} and $B_j$ is the $j^{th}$ Bernoulli number \cite{Bernoulli}.
Since $B_0=1$, the equation becomes
\begin{gather*}
\frac{E\left[H_{r,s}\right]}{s}=1-\frac{\frac{1}{r+1}(s-1)^{r+1}-B_1(s-1)^r+\cdots+(-1)^rB_r(s-1)}{s^{r+1}}.
\end{gather*}
In the limit as $s\rightarrow\infty$, 
\begin{gather*}
\begin{split}
\lim_{s\rightarrow\infty}\frac{E\left[H_{r,s}\right]}{s}
& = 1 -\lim_{s\rightarrow\infty}\frac{\frac{1}{r+1}(s-1)^{r+1}-B_1(s-1)^r+\cdots+(-1)^rB_r(s-1)}{s^{r+1}}\\
\end{split}
\end{gather*}
Note that, when $s\rightarrow\infty$, only the highest power of $s$ matters.
Since the coefficient of the $s^{r+1}$ term in the summation is $\frac{1}{r+1}$, the limit simplifies to 
\begin{equation*}
\begin{split}
\lim_{s\rightarrow\infty}\frac{E\left[H_{r,s}\right]}{s}
& = 1-\frac{1}{r+1}\\
& = \frac{r}{r+1}.
\end{split}
\end{equation*}
\end{proof}

\begin{corollary}
For a disadvantage experiment, 
\begin{equation*}
\lim_{s\rightarrow\infty}\frac{E\left[L_{r,s}\right]}{s}=\frac{1}{r+1}.
\end{equation*}
\end{corollary}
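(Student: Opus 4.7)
The plan is to avoid redoing the Faulhaber's-formula calculation by leveraging two facts already established in the paper: Theorem 3.1, which gives $\lim_{s\to\infty}\frac{E[H_{r,s}]}{s}=\frac{r}{r+1}$, and the complementarity identity $E[H_{r,s}]+E[L_{r,s}]=s+1$ from the earlier corollary. Together these reduce the statement to an arithmetic computation on limits.

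First I would divide the complementarity identity through by $s$ to obtain
\begin{equation*}
\frac{E[L_{r,s}]}{s}=\frac{s+1}{s}-\frac{E[H_{r,s}]}{s}.
\end{equation*}
Next, I would take the limit as $s\to\infty$. The first term on the right tends to $1$ trivially, and the second term tends to $\frac{r}{r+1}$ by Theorem 3.1. Subtracting yields $1-\frac{r}{r+1}=\frac{1}{r+1}$, which is the claimed limit.

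There is essentially no obstacle here: the hard asymptotic work (identifying the leading coefficient $\frac{1}{r+1}$ in the Faulhaber expansion of $\sum_{i=1}^{s-1} i^r$) has already been done in the proof of Theorem 3.1, and the symmetry between the advantage and disadvantage experiments exposed by the sum-equals-$(s+1)$ corollary immediately transfers that work to $L_{r,s}$.

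As a sanity check, one could alternatively start from the closed form $E[L_{r,s}]=1+\frac{\sum_{i=1}^{s-1}i^r}{s^r}$ in Theorem 3.2 and write
\begin{equation*}
\frac{E[L_{r,s}]}{s}=\frac{1}{s}+\frac{\sum_{i=1}^{s-1}i^r}{s^{r+1}},
\end{equation*}
where the first summand vanishes and the second converges to $\frac{1}{r+1}$ by the same Faulhaber argument used for $H_{r,s}$. I would mention this only as a cross-check; the proof via the complementarity corollary is cleaner and is what I would present.
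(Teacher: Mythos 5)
Your proposal is correct and matches the paper's own proof exactly: both divide the identity $E[H_{r,s}]+E[L_{r,s}]=s+1$ from the earlier corollary by $s$ and apply Theorem 3.1 to conclude the limit is $1-\frac{r}{r+1}=\frac{1}{r+1}$. (Your alternative cross-check via the closed form of $E[L_{r,s}]$ is also valid but, as you note, unnecessary.)
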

\begin{proof}:
By Corollary 2.4.2, $E\left[H_{r,s}\right]+E\left[L_{r,s}\right]=s+1$,
\begin{equation*}
\begin{split}
\lim_{s\rightarrow\infty}\frac{E\left[L_{r,s}\right]}{s}
& = \lim_{s\rightarrow\infty}\frac{s+1-E\left[H_{r,s}\right]}{s}\\
& = \lim_{s\rightarrow\infty}\frac{s+1}{s}-\lim_{s\rightarrow\infty}\frac{E\left[L_{r,s}\right]}{s}\\
& = 1-\frac{r}{r+1}\\
& = \frac{1}{r+1}.
\end{split}
\end{equation*}
\end{proof}

In a game involving die rolling, of particular interest may be how beneficial or harmful these extra die rolls are relative to just one roll.
After defining a relative gain and relative loss, it can be shown that they both converge asymptotically as the number of sides increases.
\begin{definition}
The relative gain in expected value, $\delta_{r,s}$, of a advantage experiment is defined as
\begin{equation*}
\delta_{r,s}=\frac{E\left[H_{r,s}\right]-E[X]}{E[X]}.
\end{equation*} 
\end{definition}
\begin{definition}
The relative loss in expected value, $\lambda_{r,s}$, of a disadvantage experiment is defined as
\begin{equation*}
\lambda_{r,s}=\frac{E[X]-E\left[L_{r,s}\right]}{E[X]}.
\end{equation*} 
\end{definition}
The following corollaries about this relative gain or loss follow readily from the above theorems.

\begin{corollary}
For an advantage experiment, 
\begin{equation*}
\lim_{s\rightarrow\infty}\delta_{r,s}=\frac{r-1}{r+1}.
\end{equation*}
\end{corollary}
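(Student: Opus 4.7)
The plan is to reduce the statement directly to Theorem 3.1 (the $r/(r+1)$ limit for $E[H_{r,s}]/s$) together with the known formula $E[X] = (s+1)/2$ from Section 1. Nothing new combinatorial is needed; the whole argument is manipulation of limits.

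First I would rewrite the definition of $\delta_{r,s}$ so that the pieces with known asymptotics appear explicitly. Since
\begin{equation*}
\delta_{r,s} \;=\; \frac{E[H_{r,s}]-E[X]}{E[X]} \;=\; \frac{E[H_{r,s}]}{E[X]}-1,
\end{equation*}
and $E[X]=(s+1)/2$, I would write
\begin{equation*}
\frac{E[H_{r,s}]}{E[X]} \;=\; \frac{2E[H_{r,s}]}{s+1} \;=\; \frac{E[H_{r,s}]}{s}\cdot \frac{2s}{s+1}.
\end{equation*}

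Next I would take the limit as $s\to\infty$. By Theorem 3.1, $E[H_{r,s}]/s \to r/(r+1)$, and clearly $2s/(s+1)\to 2$, so by the product rule for limits $E[H_{r,s}]/E[X] \to 2r/(r+1)$. Subtracting $1$ then yields
\begin{equation*}
\lim_{s\to\infty}\delta_{r,s} \;=\; \frac{2r}{r+1}-1 \;=\; \frac{r-1}{r+1},
\end{equation*}
which is the claim.

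There is really no main obstacle here; the only thing to be careful about is invoking the product rule for limits (both factors converge to finite values, so this is legitimate) and observing that $E[X]$ grows linearly in $s$, which is why dividing numerator and denominator by $s$ produces exactly the ratio handled by Theorem 3.1. The corresponding statement for $\lambda_{r,s}$ would follow by the same manipulation combined with Corollary 3.1.1, but that is not requested here.
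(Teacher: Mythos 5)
Your proposal is correct and follows essentially the same route as the paper: rewrite $\delta_{r,s}=\frac{2E[H_{r,s}]}{s+1}-1$ and invoke Theorem 3.1. Your explicit factorization $\frac{2E[H_{r,s}]}{s+1}=\frac{E[H_{r,s}]}{s}\cdot\frac{2s}{s+1}$ just makes the paper's implicit step of replacing $s+1$ by $s$ in the denominator slightly more rigorous.
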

\begin{proof}: From the definition of $\delta_{r,s}$,
\begin{equation*}
\begin{split}
\delta_{r,s}
&= \frac{E\left[H_{r,s}\right]-E[X]}{E[X]}\\
&= \frac{E\left[H_{r,s}\right]}{E[X]}-1\\
&=\frac{E\left[H_{r,s}\right]}{\frac{s+1}{2}}-1\\
&=\frac{2E\left[H_{r,s}\right]}{s+1}-1.
\end{split}
\end{equation*} 

Thus, as $s\rightarrow\infty$,
\begin{equation*}
\begin{split}
\lim_{s\rightarrow\infty}\delta_{r,s}
&=2\lim_{s\rightarrow\infty}\frac{E\left[H_{r,s}\right]}{s+1}-1\\
&=2\lim_{s\rightarrow\infty}\frac{E\left[H_{r,s}\right]}{s}-1\\
&=2\frac{r}{r+1}-1\\
&=\frac{r-1}{r+1},
\end{split}
\end{equation*} 
\end{proof}

\begin{corollary}
For a disadvantage experiment, 
\begin{equation*}
\lim_{s\rightarrow\infty}\lambda_{r,s}=\frac{r-1}{r+1}.
\end{equation*}
\end{corollary}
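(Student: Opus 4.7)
The plan is to proceed in exact parallel to the proof of Corollary 3.2.3 for $\delta_{r,s}$, but replace the use of Theorem 3.1 (the limit $E[H_{r,s}]/s \to r/(r+1)$) with Corollary 3.2 (the limit $E[L_{r,s}]/s \to 1/(r+1)$). Since $E[X]=(s+1)/2$ is known in closed form, the only work is an algebraic simplification followed by a limit computation, with no genuine combinatorial obstacle.

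Concretely, I would first rewrite the definition as
\begin{equation*}
\lambda_{r,s}=\frac{E[X]-E[L_{r,s}]}{E[X]}=1-\frac{E[L_{r,s}]}{E[X]}=1-\frac{2E[L_{r,s}]}{s+1}.
\end{equation*}
Next, I would pass to the limit as $s\to\infty$, using the standard fact that $\lim_{s\to\infty}(s+1)/s=1$ to replace $s+1$ by $s$ in the denominator without changing the limit. This reduces the problem to evaluating $2\lim_{s\to\infty}E[L_{r,s}]/s$, which is exactly $2/(r+1)$ by Corollary 3.2. Substituting gives $1-2/(r+1)=(r-1)/(r+1)$, as required.

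Strictly speaking, the main obstacle, if any, is a bookkeeping one: one must justify passing the limit through the subtraction and through the denominator change from $s+1$ to $s$. Both are routine because $E[L_{r,s}]/s$ converges to a finite limit and $(s+1)/s\to 1$, so the product and quotient rules for limits apply. An alternative, perhaps cleaner, approach would be to use Corollary 2.4.2, writing $E[L_{r,s}]=s+1-E[H_{r,s}]$, and deriving the result directly from Corollary 3.2.3; this would make the symmetry between the advantage and disadvantage cases completely transparent and avoid even invoking Corollary 3.2 a second time. Either route yields the stated value $(r-1)/(r+1)$, matching the advantage case and reflecting the underlying reflective symmetry of the two experiments.
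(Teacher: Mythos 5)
Your proposal is correct and follows essentially the same route as the paper: rewrite $\lambda_{r,s}=1-\frac{2E[L_{r,s}]}{s+1}$, replace $s+1$ by $s$ in the limit, and apply the known limit $\lim_{s\to\infty}E[L_{r,s}]/s=\frac{1}{r+1}$ to obtain $\frac{r-1}{r+1}$. The alternative you mention (using $E[L_{r,s}]=s+1-E[H_{r,s}]$) is a fine variation but not needed; note also that the paper's own displayed limit contains a typo, writing $E[H_{r,s}]$ where $E[L_{r,s}]$ is intended, which your version correctly avoids.
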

\begin{proof}: From the definition of $\lambda_{r,s}$,
\begin{equation*}
\begin{split}
\lambda_{r,s}
&= \frac{E[X]-E\left[L_{r,s}\right]}{E[X]}\\
&= 1-\frac{E\left[L_{r,s}\right]}{E[X]}\\
&=1-\frac{2E\left[L_{r,s}\right]}{s+1}.
\end{split}
\end{equation*} 

Thus, as $s\rightarrow\infty$,
\begin{equation*}
\begin{split}
\lim_{s\rightarrow\infty}\lambda_{r,s}
&=1-2\lim_{s\rightarrow\infty}\frac{E\left[H_{r,s}\right]}{s+1}\\
&=1-2\lim_{s\rightarrow\infty}\frac{E\left[H_{r,s}\right]}{s}\\
&=1-2\frac{1}{r+1}\\
&=\frac{r-1}{r+1}.
\end{split}
\end{equation*} 

\end{proof}

\section{Conclusions}

\indent In many die-rolling games such as Dungeons and Dragons, the player may roll an $s$-sided die extra times and choose the highest or lowest number among these rolls as the outcome. 
In this paper, the effects of choosing the highest or lowest of these rolls on the expected value of the random outcomes are investigated.
The frequency of each possible outcome out of any number of rolls of a fair die with any number of sides is first found.
Knowing how to calculate this frequency allows the computation of the expected values. 
It is shown that the expected value of an advantage experiment approaches $s$ and that of a disadvantage experiment approaches 1 as the number of rolls increases. 
After the expected values of these random experiments are obtained, it was found that, when the number of rolls is fixed, their values as a portion of the number of sides converges asymptotically with the number of sides.
Namely, the expected value of the outcomes from an advantage experiment as a portion of the number of sides converges to $\frac{r}{r+1}$ and that of a disadvantage experiment goes to $\frac{1}{r+1}$ asymptotically.
As a consequence of the above convergence, it was also shown that the relative gain or loss in expected value of these random experiments over the base experiment converge as well.

\end{document}